\theoremstyle{plain}
\newtheorem{definition}{Definition}[section]
\newtheorem{proposition}{Proposition}[section]
\newtheorem{remark}{Remark}[section]
\newtheorem*{customconjecture}{Conjecture}
\newtheorem*{customtheorem}{Theorem}
\newtheorem{theorem}{Theorem}[section]
\renewcommand*\@maketitle{%
  \normalfont\normalsize
  \@adminfootnotes
  \@mkboth{\@nx\shortauthors}{\@nx\shorttitle}%
  \global\topskip42\p@\relax 
  \@settitle
  \ifx\@empty\authors \else \@setauthors \fi
  \ifx\@empty\@date \else {\vskip 1em \vtop{\centering\large\@date\@@par}}\fi
  \ifx\@empty\@dedicatory
  \else
    \baselineskip18\p@
    \vtop{\centering{\footnotesize\itshape\@dedicatory\@@par}%
      \global\dimen@i\prevdepth}\prevdepth\dimen@i
  \fi
  \@setabstract
  \normalsize
  \if@titlepage
    \newpage
  \else
    \dimen@34\p@ \advance\dimen@-\baselineskip
    \vskip\dimen@\relax
  \fi
} 
\renewcommand*\@adminfootnotes{%
  \let\@makefnmark\relax  \let\@thefnmark\relax
  \ifx\@empty\@subjclass\else \@footnotetext{\@setsubjclass}\fi
  \ifx\@empty\@keywords\else \@footnotetext{\@setkeywords}\fi
  \ifx\@empty\thankses\else \@footnotetext{%
    \def\par{\let\par\@par}\@setthanks}%
  \fi
}
\renewcommand\subsection{\@startsection{subsection}{2}%
  \z@{.5\linespacing\@plus.7\linespacing}{-.5em}%
  {\normalfont\scshape}}
\title{On smooth gaps between primes using the Maynard-Tao sieve}
\author{Carol Wu}
\address{Port Moody Secondary School,
300 Albert St, Port Moody, BC V3H 2M5}
\email{carolkl.wu@gmail.com}
\date{\today}
\begin{document}

\maketitle

\begin{abstract}
    In 1999, Balog, Br\"udern, and Wooley \cite{MR1750403} showed there are infinitely many prime gaps $p-q$ that are $(\log p)^{\frac{3}{4}}$-smooth, and infinitely many consecutive prime gaps that are $(\log p)^\frac{7}{8}$-smooth. Advancements made since then by Zhang \cite{MR3171761}, Maynard \cite{MR3272929}, and Polymath8b \cite{MR3373710} towards resolving the twin prime conjecture have given us the tools to lower the bounds made by Balog, Br\"udern, and Wooley to 47. Moreover, we can show there are infinitely many $m$-tuples of primes whose gaps are all $y_m$-smooth for a calculable prime $y_m$.
\end{abstract}

\section{\centering Introduction}
    The twin prime conjecture posits there are infinitely many pairs of primes $p$, $q$ such that $p-q = 2$. Whilst unresolved, a significant result from Polymath8b \cite[Theorem~1.4]{MR3373710}, building upon the work of Goldston, Pintz, and Y\i ld\i r\i m \cite{MR2552109}; Zhang \cite{MR3171761}; and Maynard \cite{MR3272929}, asserts there are infinitely many pairs of primes $p > q$ such that $p-q \leq 246$ (a result we will elaborate on later).

    An alternative way to weaken the twin prime conjecture is to assert there are infinitely many pairs of primes $p$, $q$ such that $p-q = 2^n$, or, to restate, whose difference is 2-smooth.\footnote{An integer is $y$-smooth if its largest prime factor is less than or equal to $y$.} However, a proof of this conjecture, too, is beyond current techniques. So is the conjecture stating there are infinitely many pairs of primes $p$, $q$ whose gap is $3$-smooth, or $5$-smooth. 
    
    We define $\mathcal{S}(y)$ to be the set of all $y$-smooth integers. In 1999, Balog, Br\"udern, and Wooley \cite{MR1750403} showed there are infinitely many pairs of primes $p$, $q \asymp x$ such that $p-q \in \mathcal{S}\left(\left(\log x\right)^\frac{3}{4}\right)$, and infinitely many \textit{consecutive} primes $p$, $q \asymp x$ such that $p-q \in \mathcal{S}\left(\left(\log x\right)^\frac{7}{8}\right)$. Moreover, it follows immediately from the aforementioned results given by Polymath8b there are infinitely many pairs of primes $p$, $q$ with gap $p-q \in \mathcal{S}\left(241\right)$. In this paper, we show:

    \begin{theorem} \label{thm: inf 47-smooth pairs}
        There exists infinitely many primes $p$, $q$ such that $p-q \in \mathcal{S}(47)$.
    \end{theorem}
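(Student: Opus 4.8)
The plan is to derive Theorem~\ref{thm: inf 47-smooth pairs} directly from the strongest unconditional form of the Maynard--Tao method, namely the assertion $DHL[50,2]$ of Polymath8b \cite{MR3373710}: for every admissible $50$-tuple $\mathcal{H} = \{h_1,\dots,h_{50}\}$ there are infinitely many $n$ for which at least two of $n+h_1,\dots,n+h_{50}$ are simultaneously prime. The point is that if $\mathcal{H}$ is chosen so that \emph{every} pairwise difference $h_i-h_j$ already lies in $\mathcal{S}(47)$, then whichever two shifts $n+h_i$, $n+h_j$ turn out to be prime, their difference lies in $\mathcal{S}(47)$; running $n$ through the infinitely many admissible values then produces infinitely many prime pairs with $47$-smooth gap, which is exactly Theorem~\ref{thm: inf 47-smooth pairs}.

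The crux is therefore to exhibit an admissible $50$-tuple all of whose differences are $47$-smooth, and for this I would take the arithmetic progression
\[
  \mathcal{H} = \{\, jW : 0 \le j \le 49 \,\}, \qquad W = \prod_{p \le 47} p,
\]
the product being over primes. Every difference of two elements equals $(j-j')W$ with $1 \le |j-j'| \le 49$; here $W$ lies in $\mathcal{S}(47)$ by construction, and every integer in $[1,49]$ lies in $\mathcal{S}(47)$ as well, since its prime factors are at most $49$ and hence at most $47$ (neither $48$ nor $49$ being prime). So every difference lies in $\mathcal{S}(47)$, and $47$ cannot be lowered for this tuple, because $47W$ itself occurs as a difference.

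It remains to verify that $\mathcal{H}$ is admissible, i.e.\ that for each prime $p$ the residues $\{\, jW \bmod p : 0 \le j \le 49 \,\}$ fail to cover $\mathbb{Z}/p\mathbb{Z}$. If $p \le 47$, then $p \mid W$, so every element of $\mathcal{H}$ is $\equiv 0 \pmod p$ and the remaining $p-1 \ge 1$ classes are omitted. If $p > 47$, then $p \ge 53 > 50 = |\mathcal{H}|$, so $\mathcal{H}$ meets fewer than $p$ residue classes. Hence $\mathcal{H}$ is admissible, $DHL[50,2]$ applies to it, and writing $p$ for the larger and $q$ for the smaller of the two primes it produces gives $p-q \in \mathcal{S}(47)$ for infinitely many pairs.

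I do not anticipate a serious obstacle once $DHL[50,2]$ is taken as input: the argument collapses to the two verifications above. The only structural feature worth flagging is that admissibility at the prime $p=47$ forces $47 \mid W$ in this construction, which is precisely what pins the smoothness bound at $47$; going below $47$ would appear to require an improvement of $DHL[k,2]$ to some $k \le 46$, which is not presently available.
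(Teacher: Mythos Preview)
Your proposal is correct and follows essentially the same route as the paper: the paper also takes the arithmetic progression $\{0,\omega,2\omega,\dots,(k-1)\omega\}$ with $\omega=\prod_{p\le k}p$ (for $k=50$ this equals your $W=\prod_{p\le 47}p$), verifies admissibility and $47$-smoothness of all differences exactly as you do, and then invokes the Maynard--Tao input $k_2=50$. Your closing remark on why $47$ cannot be lowered via this method also mirrors the paper's Proposition~\ref{prop: y_m limit}.
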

    
    To this effect, we shall use a celebrated theorem due to Maynard \cite{MR3272929} (and independently Tao \cite{tao2013boundedintervals}), for which we first require some definitions to state precisely:
    
    \begin{definition}
        A $k$-tuple of integers $H_k$ is \textbf{admissible} if, for every prime $p$, the elements of $H_k$ do not cover all congruence classes $\bmod$ $p$.
    \end{definition}

    \begin{definition}
        The \textbf{diameter} of a $k$-tuple is the difference between its largest and smallest element.
    \end{definition}

    The following version of the Maynard(-Tao) Theorem was formulated by Banks, Freiberg, and Turnage-Butterbaugh \cite{MR3316460}:
    
    \begin{customtheorem} [Maynard(-Tao) Theorem] \label{thm: Maynard(-Tao)}
        For any positive integer $m \geq 2$, there exists some $k_m \in \mathbb{N}$ such that, for any admissible $k$-tuple of integers $H_{k}$ where $k \geq k_m$, there exists an infinite number of integers $n$ such that $n +h$ is prime for at least $m$ elements $h \in H_k$. 
    \end{customtheorem}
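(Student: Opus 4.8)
The plan is to prove the theorem by a positivity argument built on the Maynard--Tao multidimensional sieve. Fix an admissible $k$-tuple $H_k = \{h_1,\ldots,h_k\}$ and a large parameter $N$; I would study integers $n$ in the dyadic window $(N,2N]$ and attach the nonnegative sieve weights
\[
w_n = \left( \sum_{\substack{d_i \mid n+h_i \ \forall i}} \lambda_{d_1,\ldots,d_k} \right)^2 \geq 0.
\]
The first move is the $W$-trick: set $W = \prod_{p \leq D_0} p$ with $D_0 \to \infty$ very slowly, and restrict to $n \equiv \nu_0 \,(\mathrm{mod}\,W)$, where $\nu_0$ is chosen so that each $\nu_0 + h_i$ is coprime to $W$. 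Such a $\nu_0$ exists precisely because $H_k$ is admissible, since admissibility guarantees that for each $p \le D_0$ the $h_i$ omit some residue class modulo $p$; this is the one and only place where admissibility enters. With the small primes thus neutralized, the goal reduces to showing that
\[
S := \sum_{\substack{N < n \le 2N \\ n \equiv \nu_0 \,(\mathrm{mod}\,W)}} \left( \sum_{i=1}^{k} \mathbf{1}_{\,n+h_i \text{ prime}} \;-\; (m-1) \right) w_n > 0,
\]
because $w_n \ge 0$ then forces at least one $n$ with $\sum_i \mathbf{1}_{n+h_i \text{ prime}} \ge m$; letting $N \to \infty$ along a sequence produces infinitely many such $n$.

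Next I would parametrize the weights by a fixed smooth function $F$ supported on the simplex $\Delta_k = \{(t_1,\ldots,t_k) : t_i \ge 0,\ \sum_i t_i \le 1\}$, taking (schematically) $\lambda_{d_1,\ldots,d_k}$ proportional to $\left(\prod_i \mu(d_i)\, d_i\right)$ times a divisor sum evaluating $F\!\left(\tfrac{\log d_1}{\log R},\ldots,\tfrac{\log d_k}{\log R}\right)$, where $R = N^{\theta/2 - \varepsilon}$ and $\theta$ is the available level of distribution for the primes. The heart of the computation is then to expand $S = S_2 - (m-1)S_1$ and evaluate the two pieces asymptotically:
\[
S_1 = \big(1+o(1)\big)\,\frac{\varphi(W)^k}{W^{k+1}}\,N\,(\log R)^k\, I_k(F), \qquad S_2 = \big(1+o(1)\big)\,\frac{\varphi(W)^k}{W^{k+1}}\,\frac{N}{\log N}\,(\log R)^{k+1} \sum_{i=1}^{k} J_k^{(i)}(F),
\]
where $I_k(F) = \int_{\Delta_k} F^2$ and $J_k^{(i)}(F) = \int \big(\int_0^1 F\,dt_i\big)^2$. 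The $S_1$ asymptotic is a routine, if lengthy, sieve manipulation; the $S_2$ asymptotic is where genuine analytic input is needed, since it requires counting each $n+h_i$ over primes in arithmetic progressions modulo $Wd$ and summing the resulting error over all $d < R$. I would control this via the Bombieri--Vinogradov theorem, which supplies level $\theta = 1/2$ and thus permits $\log R / \log N \to 1/4$.

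Combining the two asymptotics gives $S_2/S_1 = (1+o(1))\,\tfrac{\log R}{\log N}\,\tfrac{\sum_i J_k^{(i)}(F)}{I_k(F)}$, so positivity of $S$ follows once $\tfrac{\sum_i J_k^{(i)}(F)}{I_k(F)} > 4(m-1)$ for some admissible $F$; that is, once
\[
M_k := \sup_{F} \frac{\sum_{i=1}^{k} J_k^{(i)}(F)}{I_k(F)} > 4(m-1).
\]
The final step is therefore purely variational: exhibit a family of test functions on $\Delta_k$ --- for instance symmetric products $F(t) = \prod_i g(t_i)$ cut off to the simplex, or Maynard's piecewise-polynomial choices --- and estimate the $k$-dimensional integrals to prove the lower bound $M_k \ge \log k - C$ for an absolute constant $C$ and all large $k$. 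Since $\log k - C \to \infty$ monotonically, defining $k_m$ to be any integer exceeding both this threshold and $e^{4(m-1)+C}$ guarantees $M_k > 4(m-1)$ for every $k \ge k_m$, which completes the proof.

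I expect the variational lower bound $M_k \ge \log k - C$ to be the main obstacle: naive test functions (e.g.\ $F \equiv 1$ on the simplex) only keep $M_k$ bounded by a constant, so one must design $F$ concentrated near the corners where a single coordinate is large, and then bound the nested integrals $I_k$ and $J_k^{(i)}$ sharply enough to extract the logarithmic growth. A secondary technical hurdle is the uniform control of the error term in $S_2$ across the full range $d < R$, for which the Bombieri--Vinogradov theorem is exactly the tool that makes the argument unconditional; any larger admissible level $\theta$ would only improve the constant in the criterion, not the qualitative conclusion.
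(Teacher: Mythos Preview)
The paper does not prove this statement at all: it is quoted as an external result, attributed to Maynard \cite{MR3272929} and Tao \cite{tao2013boundedintervals} with the stated formulation taken from Banks, Freiberg, and Turnage-Butterbaugh \cite{MR3316460}, and then used as a black box in the proofs of Theorems~1.2 and~1.4. So there is no ``paper's own proof'' to compare your proposal against.

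That said, your sketch is a faithful outline of the standard Maynard--Tao argument --- the $W$-trick exploiting admissibility, the multidimensional sieve weights indexed by a test function $F$ on the simplex, the asymptotic evaluation of $S_1$ and $S_2$ via Bombieri--Vinogradov at level $\theta=1/2$, the positivity criterion $M_k > 4(m-1)$, and the variational lower bound $M_k \ge \log k - C$ giving $k_m \ll e^{4m}$ --- and is correct at the level of detail given. It simply goes well beyond what the paper attempts, since the paper's contribution is the downstream application (constructing admissible difference-smooth tuples), not a reproof of the sieve input.
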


    The $m=2$ case was proved by Zhang in 2013, with $k_2 = 3.5 \cdot 10^6$ \cite[Theorem~1]{MR3171761}. Polymath8a \cite{MR3294387} refined his method such that one could take $k_2 = 632$. Maynard \cite[Proposition~4.3]{MR3272929} discovered a simpler method that lowered $k_2$ to 105 and showed $k_m$ to be finite for all $m$, giving $k_m < cm^2e^{4m}$ for some absolute constant $c$. Subsequent optimizations made by Polymath8b \cite[Theorem~3.2]{MR3373710} and Stadlmann \cite[Corollary~1]{stadlmann2023primes} give us the current lowest known values of $k_m$ recorded in \Cref{tab: k_m values}.

        \begin{table}[] 
        \centering
        \begin{tabular}{c|c}
             $m$ & $k_m$ \\
             \hline
             2 & 50 \\
             3 & 35265 \\
             4 & 1624545 \\
             5 & 73807570 \\
             6 & 3340375663 \\
        \end{tabular}
        \caption{Lowest known values of $k_m$}
        \label{tab: k_m values}
    \end{table}
    
    \begin{remark}\label{rem: stadlmann k_m}
        For $m > 6$, Stadlmann \cite[Theorem~2]{stadlmann2023primes} showed $k_m < ce^{3.8075m}$ for some absolute constant $c$.
    \end{remark}

    \begin{remark}\label{fac: engelsma diameter admissible}
        Computations by Engelsma in an unpublished work \cite[Table~5]{engelsma2009} show that an admissible $50$-tuple of integers with diameter 246 exists, and that this diameter is minimal over all admissible $50$-tuples.
    \end{remark}
    
    \begin{remark}
        An immediate consequence of the above fact, and the Maynard(-Tao) Theorem, is the existence of infinitely many pairs of primes $p > q$ such that $p-q\leq 246$.
    \end{remark}

    \begin{remark} \label{rem: elliott-halberstam}
        Maynard \cite{MR3272929} showed $k_2$ can be lowered to 5 assuming the Elliott-Halberstam Conjecture. 
    \end{remark}
    
    The twin prime conjecture belongs to a larger group of problems on the distribution of patterns of small prime gaps. Such patterns can be represented by a $k$-tuple of integers $H_k$. In generalizing the twin prime conjecture, one can ask: for how many integers $n$ is $n + h$ prime for all $h \in H_k$? The case $H_2 = ( 0, 2 )$ corresponds to the twin prime conjecture. In 1923, Hardy and Littlewood \cite{MR1555183}, through probabilistic reasoning, made the following conjecture (the formulation we present here follows Tao \cite{tao2013primegapshistory}):

    \begin{customconjecture} [Hardy-Littlewood $k$-tuples Conjecture]
        
         Given an admissible $k$-tuple $H_k$, we define 
         $$\mathfrak{G} = \mathfrak{G}\left(H_k\right) := \prod\limits_{p \in \mathcal{P}} \frac{1-\frac{v_p}{p}}{\left(1-\frac{1}{p}\right)^k}$$  
         where $\mathcal{P}$ is the set of all primes and $v_p$ is the number of congruence classes $\bmod$ $p$ covered by the elements in $H_k$. 
         
         Then, the number of natural numbers $ n < x $ such that $ n + H_k$ consists entirely of primes is asymptotic to $\mathfrak{G}\frac{x}{(\log x)^k}$.
    \end{customconjecture}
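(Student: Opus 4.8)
The plan is to reconstruct the original probabilistic heuristic of Hardy and Littlewood, which predicts the asymptotic by multiplying together local densities, one for each prime, and then to indicate how the circle method would in principle upgrade this heuristic to a theorem. First I would pass from the prime-counting statement to the smoothly weighted count $\Psi(x) := \sum_{n < x} \prod_{i=1}^{k} \Lambda(n + h_i)$, where $\Lambda$ is the von Mangoldt function and $H_k = (h_1, \dots, h_k)$; an asymptotic for $\Psi(x)$ transfers to the count of $n$ with $n + H_k$ entirely prime by partial summation, since the prime powers contribute a negligible amount and each genuine prime is weighted by a logarithm.

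Next I would build the local model prime by prime. For a fixed prime $p$, the residues $n \bmod p$ for which \emph{none} of $n + h_1, \dots, n + h_k$ is divisible by $p$ are exactly those avoiding the $v_p$ classes covered by $H_k$, a set of density $1 - \frac{v_p}{p}$. Under the (heuristically suggestive, though strictly false) assumption that divisibility by distinct primes behaves independently, the probability that $n + H_k$ is simultaneously prime should equal the naive prediction times a correction factor at each prime, namely the ratio of the true local density $1 - \frac{v_p}{p}$ to the density $\left(1 - \frac{1}{p}\right)^{k}$ expected if the $k$ coordinates were independently coprime to $p$. Admissibility guarantees $v_p \le p - 1$, so every factor is nonzero, and a short computation shows the product over $p \in \mathcal{P}$ converges to precisely the singular series $\mathfrak{G}(H_k)$.

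To assemble the global count, I would invoke the prime number theorem to assign each $n + h_i$ the density $1/\log n$, so that the naive joint density is $1/(\log n)^{k}$; multiplying by the arithmetic correction $\mathfrak{G}$ and summing over $n < x$, with the sum replaced by $\int_2^{x} \frac{dt}{(\log t)^k} \sim \frac{x}{(\log x)^k}$, yields the predicted asymptotic $\mathfrak{G}\,\frac{x}{(\log x)^k}$. The rigorous route runs through the circle method: one expands $\Psi(x)$ using the additive characters $e(\theta) = e^{2\pi i \theta}$, writes it as $\int_0^1 \prod_{i} S(\alpha) e(-h_i \alpha)\, d\alpha$ with $S(\alpha) = \sum_{n < x} \Lambda(n) e(n\alpha)$, and dissects $[0,1]$ into major arcs around rationals $a/q$ and complementary minor arcs $\mathfrak{m}$. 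On the major arcs the standard Ramanujan-sum evaluation produces exactly the singular series $\mathfrak{G}$ as a convergent sum of multiplicative local factors, recovering the heuristic main term.

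The hard part — indeed the reason the statement remains a conjecture rather than a theorem — is the minor arcs. Controlling $\int_{\mathfrak{m}} \prod_{i} S(\alpha) e(-h_i \alpha)\, d\alpha$ demands genuine cancellation in a product of $k \ge 2$ exponential sums over primes, and no available estimate delivers this: already the case $k = 2$, $H_2 = (0,2)$, is equivalent to producing twin primes with the correct density, which is entirely beyond current technology. I therefore expect to be able to establish rigorously only the major-arc (singular-series) prediction, and I would flag the minor-arc bound as the genuine and, at present, insurmountable obstacle, so that the argument above stands as a heuristic derivation of $\mathfrak{G}\,\frac{x}{(\log x)^k}$ rather than a complete proof.
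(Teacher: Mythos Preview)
The statement you are trying to prove is labeled a \emph{conjecture} in the paper, and the paper does not attempt a proof of it; indeed, immediately after stating it the paper says only that ``this conjecture is generally believed to be true and appears consistent with experimental data.'' There is therefore no proof in the paper for your attempt to be compared against.

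Your write-up is a perfectly reasonable account of the standard heuristic that leads Hardy and Littlewood to the singular-series prediction, and you correctly identify that the minor-arc contribution in the circle method (already for $k=2$) is the genuine obstruction. But precisely because that obstruction is, as you say, ``insurmountable'' with present methods, what you have produced is a motivation for the conjecture, not a proof of it. Since the paper neither claims nor supplies a proof, the appropriate conclusion is simply that no proof is expected here.
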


    This conjecture is generally believed to be true and appears consistent with experimental data. Moreover, the Maynard(-Tao) Theorem is a significant stride towards it. We will be applying the Maynard(-Tao) Theorem to the question of smooth gaps between primes, with the following results: 
    
    \begin{theorem} \label{thm: inf p-smooth primes}
        Let $y_m$ represent the largest prime less than or equal to $k_m$, where $k_m$ is as in \Cref{tab: k_m values}, \Cref{rem: stadlmann k_m}, and \Cref{rem: elliott-halberstam}. Then, there exists infinitely many $m$-tuples of primes $P_m = \left(p_1\text{, } p_2\text{, } p_3\text{, } ... \text{, } p_m\right)$ such that  $p_i - p_j \in \mathcal{S}(y_m)$ for all $1 \leq j < i \leq m$.
    \end{theorem}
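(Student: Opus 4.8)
The plan is to produce, for each $m$ (and its associated value $k_m$), one fixed admissible $k_m$-tuple every pairwise difference of which is $y_m$-smooth, and then to apply the Maynard(-Tao) Theorem (\Cref{thm: Maynard(-Tao)}). The tuple I would use is a block of consecutive integers dilated by a primorial:
$$H := \{\, j\,d : 0 \le j \le k_m - 1 \,\}, \qquad d := \prod_{p \le k_m} p \quad (\text{product over primes } p).$$
Here $k_m$ is taken from \Cref{tab: k_m values}, \Cref{rem: stadlmann k_m}, or \Cref{rem: elliott-halberstam}, and $y_m$ is the largest prime $\le k_m$; the only arithmetic input required is the tautology that, by maximality of $y_m$, the interval $(y_m, k_m]$ contains no prime.

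I would first verify that $H$ is admissible. Let $p$ be a prime. If $p \le k_m$ then $p \mid d$, so every element of $H$ is congruent to $0 \bmod p$, and $H$ meets only one residue class mod $p$ (and since $p \ge 2$ there is a class it misses). If $p > k_m$ then $p \nmid d$, so multiplication by $d$ is a bijection of $\mathbb{Z}/p\mathbb{Z}$; it sends $\{0, 1, \ldots, k_m - 1\}$ (which occupies $k_m$ of the $p$ residue classes) onto $H \bmod p$, so $H$ too occupies only $k_m < p$ classes and therefore misses one. I would then verify smoothness of the differences: any difference of two distinct elements of $H$ has the form $jd$ with $1 \le j \le k_m - 1$. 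Every prime factor of $d$ is $\le k_m$, hence, as $(y_m, k_m]$ contains no prime, is $\le y_m$; applying the same observation to the largest prime factor of $j$ (which is $\le j \le k_m - 1$) shows every prime factor of $j$ is $\le y_m$ as well. Therefore $jd \in \mathcal{S}(y_m)$. This step is also where the dilation length is pinned to exactly $k = k_m$: for any larger $k$, either the block $\{0, 1, \ldots, k-1\}$ or the primorial $\prod_{p \le k} p$ could acquire a prime factor exceeding $y_m$.

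With $H$ fixed, the Maynard(-Tao) Theorem yields infinitely many integers $n$ for which $n + h$ is prime for at least $m$ of the values $h \in H$. For each such $n$, select $m$ of these values and relabel the resulting primes (which are distinct, being $n+h$ for distinct $h$) so that $p_1 < p_2 < \cdots < p_m$; then for all $1 \le j < i \le m$ the gap $p_i - p_j$ is a difference of two elements of $H$ and so lies in $\mathcal{S}(y_m)$. Since distinct $n$ give $m$-tuples with arbitrarily large entries, there are infinitely many admissible $P_m$, which is the claim. The argument is uniform in which table or remark $k_m$ is drawn from, and specializing to $m = 2$, $k_2 = 50$, $y_2 = 47$ recovers \Cref{thm: inf 47-smooth pairs}. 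I do not anticipate a genuine obstacle; the parts that need care are the two smoothness verifications, both of which collapse to the single remark that no prime lies in $(y_m, k_m]$.
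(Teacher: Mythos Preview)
Your proof is correct and follows essentially the same route as the paper: the paper splits the argument into two pieces (Theorem~1.4 constructs the admissible, difference $k_m$-smooth tuple, and Theorem~1.2 feeds it into Maynard--Tao), but the construction used is exactly your primorial-dilated arithmetic progression $H = \{0, d, 2d, \ldots, (k_m-1)d\}$ with $d = \prod_{p \le k_m} p$, and the admissibility and smoothness verifications are identical to yours.
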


    Note that \Cref{thm: inf 47-smooth pairs} is simply a special case of \Cref{thm: inf p-smooth primes}, where $m = 2$, $k_m = 50$, and $y_m = 47$. 

    \begin{remark}
        Improvements on the bound on $k_m$ would improve upon \Cref{thm: inf p-smooth primes}, lowering the bound $y_m$.
    \end{remark}

    For example:
    \begin{theorem} \label{thm: elliott-halberstam inf 5-smooth primes}
        Assuming the Elliott-Halberstam Conjecture, there exists infinitely many pairs of primes $p$, $q$ such that $p-q \in \mathcal{S}(5)$.
    \end{theorem}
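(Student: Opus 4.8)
The plan is to combine Remark~\ref{rem: elliott-halberstam}, which supplies the Elliott--Halberstam-conditional value $k_2 = 5$, with an explicit admissible $5$-tuple whose pairwise differences all lie in $\mathcal{S}(5)$. Concretely, I would work with the arithmetic progression $H_5 = (0,\, r,\, 2r,\, 3r,\, 4r)$, where $r = 2\cdot 3\cdot 5 = 30$ is the product of the primes not exceeding $5$; equivalently $H_5 = (0, 30, 60, 90, 120)$.

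The first step is to check that every pairwise difference of $H_5$ is $5$-smooth. Such a difference has the shape $jr$ with $1 \le j \le 4$, and since every prime factor of $j$ is at most $j \le 4$ and every prime factor of $r$ is at most $5$, we get $jr \in \mathcal{S}(5)$. The second step is to verify that $H_5$ is admissible. For each prime $p \in \{2,3,5\}$ we have $p \mid r$, so all five elements of $H_5$ are $\equiv 0 \pmod p$; thus $H_5$ meets only one residue class mod $p$ and misses $p-1 \ge 1$ of them. For every prime $p \ge 7$, five elements cannot cover all $p$ residue classes. Hence $H_5$ is admissible.

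With these two facts in hand, the third step is immediate: assuming the Elliott--Halberstam Conjecture, Remark~\ref{rem: elliott-halberstam} permits $k_2 = 5$, so the Maynard(-Tao) Theorem applies with $m = 2$ and $k = 5 = k_2$ to the admissible tuple $H_5$. It furnishes infinitely many integers $n$ for which $n + h$ is prime for at least two elements $h \in H_5$. For each such $n$, choose two of them, say $h_i > h_j$, and set $p = n + h_i$ and $q = n + h_j$; then $p$ and $q$ are primes with $p - q = h_i - h_j \in \mathcal{S}(5)$. Since the admissible $n$ are unbounded, these pairs are infinitely many, proving Theorem~\ref{thm: elliott-halberstam inf 5-smooth primes}.

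I do not expect a genuine obstacle here: the analytic heavy lifting is entirely contained in the conditional bound $k_2 = 5$, which we are free to assume. The only point requiring care is the choice of tuple, where two competing demands must be met at once --- the common difference $r$ must be divisible by each prime $\le 5$ so that admissibility holds at $2$, $3$, and $5$, while $r$ together with the multipliers $1,\dots,4$ must all be $5$-smooth so that the gaps stay $5$-smooth --- and the primorial $r = 30$ is precisely the minimal choice meeting both. (One could instead use a tuple of smaller diameter, such as $(0,2,6,8,12)$, whose differences $\{2,4,6,8,10,12\}$ are all $5$-smooth and which one checks is admissible, but the progression above is cleaner and generalizes directly to the construction underlying Theorem~\ref{thm: inf p-smooth primes}.)
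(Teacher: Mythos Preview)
Your proof is correct and follows essentially the same approach as the paper: the paper simply cites Remark~\ref{rem: elliott-halberstam} together with Theorem~\ref{thm: inf p-smooth primes}, and your argument is precisely the unwinding of that citation in the special case $m=2$, $k_m=5$, using the same primorial-progression tuple $H_5 = (0,30,60,90,120)$ that the paper's proof of Theorem~\ref{thm: admissible and y-k smooth sets} constructs for general $k$.
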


    \begin{proof}
        Follows directly from \Cref{rem: elliott-halberstam} and \Cref{thm: inf p-smooth primes}.
    \end{proof}

    \begin{definition}
        A tuple $H$ is \textbf{difference y-smooth} if $h_i - h_j \in \mathcal{S}(y)$ for every pair of elements $h_i, h_j \in H$.
    \end{definition} 

    \begin{theorem} \label{thm: admissible and y-k smooth sets}
        For any integer $k$, there exists a $k$-tuple of integers $H_k$ such that $H_k$ is admissible and $H_k$ is difference $k$-smooth.
    \end{theorem}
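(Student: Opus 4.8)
The plan is to exhibit such a tuple explicitly by spacing $k$ integers along an arithmetic progression whose common difference is the primorial of $k$. Concretely, let $W = \prod_{p \leq k} p$ be the product of all primes not exceeding $k$ (so $W = 1$ when $k = 1$), and take $H_k = \{0, W, 2W, \ldots, (k-1)W\}$. The point is that the single choice of common difference $W$ does double duty: because $W$ has no prime factor exceeding $k$, it forces every pairwise difference to be $k$-smooth; and because $W$ is divisible by every prime $\leq k$, it collapses $H_k$ into one residue class modulo each such prime, which is exactly what is needed for admissibility at the small primes.

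First I would check difference $k$-smoothness. Any two elements of $H_k$ differ by $(i-j)W$ with $1 \leq |i-j| \leq k-1$. Every prime dividing $|i-j|$ is at most $k-1 < k$, and every prime dividing $W$ is at most $k$ by construction, so $(i-j)W$ lies in $\mathcal{S}(k)$. Hence $H_k$ is difference $k$-smooth.

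Next I would verify admissibility by fixing a prime $q$ and exhibiting a residue class modulo $q$ not hit by $H_k$. If $q \leq k$, then $q \mid W$, so every element of $H_k$ is $\equiv 0 \pmod q$; since $q \geq 2$, the class $1 \bmod q$ is missed. If $q > k$, then $H_k$ has only $k < q$ elements, so it meets at most $k$ of the $q$ residue classes and at least one is missed. In either case $H_k$ does not cover all classes modulo $q$, so $H_k$ is admissible, which completes the argument.

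Honestly there is no serious obstacle here: the only conceptual ingredient is the standard observation that admissibility is a genuine restriction only at primes $q \leq k$, so the two demands — that the (small) spacings be smooth, and that the tuple be spread enough to avoid covering a prime — are not in tension once one works in an arithmetic progression with primorial common difference. The one spot that warrants a moment of care is the bookkeeping on prime factors of the differences, and that is handled cleanly by the bound $|i-j| \leq k-1$.
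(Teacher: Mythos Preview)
Your proposal is correct and follows essentially the same approach as the paper: both take $H_k$ to be the arithmetic progression $\{0, W, 2W, \ldots, (k-1)W\}$ with $W = \prod_{p \le k} p$, and verify admissibility and difference $k$-smoothness by the same two-case split on primes $q \le k$ versus $q > k$. Your write-up is in fact slightly more careful (e.g.\ treating $k=1$ and explicitly exhibiting a missed residue class), but the construction and argument are identical.
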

        
        \Cref{thm: inf p-smooth primes} follows from \Cref{thm: admissible and y-k smooth sets}, which we will show in \Cref{sec: proof inf p-smooth primes}.
        
    \begin{remark}
        Balog, Br\"udern, and Wooley \cite{MR1750403} distinguished between smooth prime gaps and consecutive smooth gaps. However, Banks, Freiberg, and Turnage-Butterbaugh \cite[Theorem~1]{MR3316460} established the admissible $k$-tuples in the Maynard(-Tao) Theorem also infinitely often represent consecutive prime gaps. Hence, there is no need to distinguish between consecutive and non-consecutive smooth prime gaps. 
    \end{remark}
\subsection*{\centering Acknowledgements}
    I would like to express my sincere gratitude to Dr. Anurag Sahay for his invaluable guidance and mentorship throughout the research process, and to Dr. Trevor Wooley for useful discussions and feedback.
    
\section{Proof of Theorem 1.2} \label{sec: proof inf p-smooth primes}
    
    \begin{proof} [Proof of \Cref{thm: inf p-smooth primes}]
        
        Assuming \Cref{thm: admissible and y-k smooth sets}, let $H$ be a $k_m$-tuple that is admissible and difference $k_m$-smooth. 
        
        Since $H$ is admissible and contains at least $k_m$ elements, then, by the Maynard(-Tao) Theorem, there are infinitely many $n \in \mathbb{N}$ such that $n+h$ is prime for at least $m$ elements $h \in H$. 

        Let $P_m (n) = (p_1 \text{, } p_2 \text{, } p_3 \text{, } ... \text{, } p_m)$ be such an $m$-tuple of primes.

        For any two elements of $p_i$, $p_j \in P_m(n)$, $p_i - p_j = h_s - h_t$, where $h_s, h_t \in H$. Recall $H$ is difference $k_m$-smooth, so, $p_i - p_j = h_s-h_t \in \mathcal{S}(y_m)$. 
        
        Therefore, there are infinitely many $m$-tuples of primes $P_m (n)$ with gaps $p_i - p_j \in \mathcal{S}(k_m) = \mathcal{S}(y_m)$ for all pairs of elements $p_i$, $p_j \in P_m(n)$.
    \end{proof}

\section{\centering Proof of Theorem 1.4} 
    \begin{proof} [Proof of \Cref{thm: admissible and y-k smooth sets}]
        Let $\omega$ be the product of all positive primes up to $k$ inclusive.
    
        Consider the arithmetic progression $H_k = (0, \omega, 2\omega, 3\omega, ..., (k-1)\omega)$.

        $H_k$ has all the desired properties: 
        
        \begin{enumerate}
            \item There are $k$ elements.
            \item For every prime $p \leq k$, $p \mid \omega$. So, every element of $H_k$ is $0 \bmod p$. For every prime $p > k$, there are at least $k + 1$ congruence classes mod $p$, but only $k$ elements in $H_k$, and therefore, the elements of $H_k$ cannot cover every congruence class mod $p$. Hence, $H_k$ is admissible. 
            \item The difference between any two elements of $H_k$ is $a\omega$ for some integer $0 < a < k$. We define $y$ to be the largest prime not exceeding $k$. Then, both $a$ and $\omega \in \mathcal{S}(y)$, so $a\omega \in \mathcal{S}(y)$. Therefore, $H_k$ is difference $y$-smooth.
        \end{enumerate} 
    \end{proof}
\section{\centering Optimality of $y_m$}
    A natural question arises from the above formulation: Is the bound $y_m$ optimal, or can it be lowered? 
    \begin{proposition} \label{prop: y_m limit}
        Let $z_k$ represent the largest prime less than or equal to $k$. Then, $H_k$ is admissible implies $H_k$ is \textit{not} difference $\ell$-smooth for any $\ell < z_k$.
    \end{proposition}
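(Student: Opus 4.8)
The plan is to argue by contradiction, exploiting the single prime $p := z_k$ (which exists and satisfies $p \leq k$ for every $k \geq 2$). Suppose $H_k$ is admissible and yet difference $\ell$-smooth for some $\ell < z_k$. The first observation is that, since $p$ is a prime strictly exceeding $\ell$, no nonzero $\ell$-smooth integer can be divisible by $p$: a prime divisor of such an integer must be at most $\ell < p$. Consequently, for every pair of distinct entries $h_i, h_j \in H_k$, the difference $h_i - h_j$ is nonzero (the entries of a tuple are distinct) and $\ell$-smooth by hypothesis, hence $p \nmid (h_i - h_j)$. Equivalently, the $k$ elements of $H_k$ are pairwise incongruent modulo $p$.

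Next I would invoke a pigeonhole count. There are exactly $p$ residue classes modulo $p$, while $H_k$ supplies $k$ elements that are pairwise distinct modulo $p$; since $k \geq p$, this is possible only if $k = p$ and the elements of $H_k$ occupy \emph{every} residue class modulo $p$. (If $k > p$, pairwise distinctness modulo $p$ is already impossible, so that sub-case needs no further argument.) In either situation $H_k$ covers all congruence classes modulo the prime $p = z_k$, which contradicts the definition of admissibility. Hence no such $\ell$ exists, and $H_k$ is not difference $\ell$-smooth for any $\ell < z_k$.

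The argument is a one-prime pigeonhole count, so there is no serious obstacle; the only points needing a little care are the convention that the entries of a $k$-tuple are distinct (so the relevant differences are nonzero and "$\ell$-smooth" applies cleanly) and the trivial separate handling of $k > z_k$. It is worth remarking that this matches the construction in \Cref{thm: admissible and y-k smooth sets}: because there is no prime strictly between $z_k$ and $k$, one has $\mathcal{S}(k) = \mathcal{S}(z_k)$, so the arithmetic progression $H_k = (0, \omega, 2\omega, \ldots, (k-1)\omega)$ is in fact difference $z_k$-smooth, and \Cref{prop: y_m limit} shows this exponent is best possible. In particular, taking exactly $k_m$ elements in the Maynard(-Tao) Theorem yields the sharp value $y_m = z_{k_m}$, since enlarging the tuple to $k > k_m$ elements only increases $z_k$.
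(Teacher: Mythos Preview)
Your argument is correct and is essentially the paper's own proof run in the contrapositive direction: both isolate the single prime $p = z_k$ and apply a pigeonhole count on residues modulo $p$, the paper concluding that admissibility forces two entries to share a residue (hence a difference divisible by $z_k$), while you conclude that difference $\ell$-smoothness forces all residues to be distinct and hence fully covered. The extra care you take with the distinctness convention and the $k = z_k$ versus $k > z_k$ split is fine but not strictly needed, and your closing remarks on optimality are accurate commentary rather than part of the proof.
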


    \begin{proof}
        Consider the elements of $H_k \bmod z_k$. 

        Because $H_k$ is admissible, its elements cannot cover all congruence classes mod $z_k$. At most, its $k$ elements can cover $z_k-1$ congruence classes. But $k > z_k - 1$, so by the pigeonhole principle, there exists some pair of elements $h_i, h_j \in H_k$ in the same congruence class mod $z_k$. 

        Then, $h_i - h_j \equiv 0 \bmod z_k$, so $z_k \mid h_i - h_j$. But $z_k > \ell$ and $z_k$ is prime, so $z_k$ is a witness for $H_k$ not being difference $\ell$-smooth for any $\ell < z_k$. 
    \end{proof}

    \begin{remark}
        As a consequence of \Cref{prop: y_m limit}, improving \Cref{thm: inf p-smooth primes} requires a different method, one avoiding the Maynard(-Tao) sieve.
    \end{remark}

\bibliographystyle{plain}
\bibliography{sources.bib} 

\end{document}